\newtheorem{lemma}{Lemma}
\newtheorem{proposition}[lemma]{Proposition}
\newtheorem{theorem}[lemma]{Theorem}
\newtheorem{question}[lemma]{Question}
\newtheorem{example}[lemma]{Example}
\newtheorem{definition}[lemma]{Definition}
\title[]{On the type of an almost Gorenstein monomial curve}
\author{Alessio Moscariello}
\subjclass[2010]{13H10, 13F99, 20M14, 20M25}
\keywords{almost Gorenstein local ring, Cohen-Macaulay type, almost symmetric numerical semigroups}
\address[Alessio Moscariello]{Dipartimento di Matematica e Informatica, \ Universit\`a di Catania, \  Viale Andrea Doria 6, 
95125 Catania,Italy}
\address[Alessio Moscariello]{Scuola Superiore di Catania, \ Universit\`a di Catania, \  Via Valdisavoia 9, 
95125 Catania, Italy.}
\email{alessio.moscariello@studium.unict.it}
\begin{document}
\maketitle
\begin{abstract}
We prove that the Cohen-Macaulay type of an almost Gorenstein monomial curve $\mathcal C \subseteq \mathbb{A}^4$ is at most $3$, and make some considerations on the general case.

\end{abstract}
\section*{Introduction}
Almost Gorenstein rings have been introduced by Barucci and Fr\"{o}berg (cf. \cite{BF}) as a larger class of Cohen-Macaulay rings that are next to Gorenstein. In the same work, the authors proved some results for this class of rings, that found applications in \cite{KM}. The original definition was given for one-dimensional analytically unramified local rings; however recently Goto et al. (cf. \cite{G}) adapted this definition in order to deal with local Cohen-Macaulay rings of arbitrary dimension.

This work is focused on investigating possible bounds for the Cohen Macaulay type of local rings associated to almost Gorenstein monomial curves, in function of the embedding dimension. It is well-known that for one-dimensional analytically unramified local rings with embedding dimension $3$, not necessarily almost Gorenstein, the Cohen Macaulay type does not exceed $2$ (cf. \cite{FGH}, Theorem 11). However, in the same paper it has been showed that, if the embedding dimension is greater than $3$, there is no upper bound for the type. Thus the smallest interesting case is that of the coordinate ring of an almost Gorenstein monomial curve in $\mathbb{A}^4$. In this setting, further motivation for this work arises from a question by Numata (cf. \cite{N}), which we prove with the following:
\begin{theorem}\label{main}
The Cohen-Macaulay type of an almost Gorenstein monomial curve $ \mathcal C \subseteq \mathbb{A}^4$ is at most 3.
\end{theorem}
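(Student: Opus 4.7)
The plan is to recast the statement in the language of numerical semigroups. If $\mathcal{C}$ is parametrized by $t \mapsto (t^{n_1}, t^{n_2}, t^{n_3}, t^{n_4})$, then the Cohen-Macaulay type of $k[[\mathcal{C}]]$ equals the type $t(S) = |\mathrm{PF}(S)|$ of the associated numerical semigroup $S = \langle n_1, n_2, n_3, n_4 \rangle$, and the almost Gorenstein condition on $\mathcal{C}$ translates to $S$ being almost symmetric. Hence the theorem reduces to the combinatorial statement: if $S$ has embedding dimension $4$ and is almost symmetric, then $t(S) \leq 3$.

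By Nari's characterization, $S$ almost symmetric of type $t$ means $\mathrm{PF}(S) = \{f_1 < \cdots < f_{t-1} < f_t = F\}$ with $f_i + f_{t-i} = F$ for $1 \leq i \leq t-1$. I would assume for contradiction that $t \geq 4$ and split the analysis into the cases $t = 4$ (where $f_2 = F/2$ is itself forced to be a pseudo-Frobenius number) and $t \geq 5$ (where all non-$F$ pseudo-Frobenius numbers pair up nontrivially). The principal tool is the Ap\'ery set $\mathrm{Ap}(S, n_1) = \{w \in S : w - n_1 \notin S\}$, whose $n_1$ elements include $n_2, n_3, n_4$ and whose maximal elements with respect to the order $\leq_S$ are exactly $\{f + n_1 : f \in \mathrm{PF}(S)\}$.

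For each $f \in \mathrm{PF}(S)$ and each generator $n_j$ with $j \in \{2, 3, 4\}$, the sum $f + n_j$ lies in $S$ and admits a decomposition $f + n_j = \sum a_k n_k$ with $a_k \in \mathbb{Z}_{\geq 0}$, which is tightly constrained because $f$ itself sits just outside $S$. Using the symmetry $f_i + f_{t-i} = F$, I would write $F + n_j = f_i + (f_{t-i} + n_j)$ in two different ways and combine the resulting decompositions to extract an equality of the form $n_\ell = \sum_{k \neq \ell} c_k n_k$ with $c_k \in \mathbb{Z}_{\geq 0}$, contradicting the minimality of the generating set. The count of such relations grows with $t$, so at $t \geq 4$ the system is expected to be over-determined for an embedding dimension $4$ semigroup; the case $t \geq 5$ should essentially follow by a cardinality argument on the antichain of maximal elements of $\mathrm{Ap}(S,n_1)$ against the three non-multiplicity generators.

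The main obstacle will be the combinatorial case analysis required to close the borderline case $t = 4$: one has to enumerate the finitely many admissible decomposition types of $f_i + n_j$ and verify that every possible configuration either violates minimality of the generators or contradicts the pairing structure of $\mathrm{PF}(S)$. The presence of the middle pseudo-Frobenius number $F/2$ makes this case the most delicate, since it enjoys less rigidity than the fully paired elements, and the argument may need to be split further according to the residues of the $n_j$ modulo $n_1$ and the parity of $F$.
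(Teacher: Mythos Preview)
Your reduction to numerical semigroups and invocation of Nari's pairing $f_i+f_{t-i}=F$ are correct and match the paper's setup. However, what follows is a plan rather than a proof, and the plan has substantive gaps.

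For $t\ge 5$ you appeal to ``a cardinality argument on the antichain of maximal elements of $\mathrm{Ap}(S,n_1)$ against the three non-multiplicity generators'', but this antichain has exactly $t(S)$ elements regardless, and nothing in the Ap\'ery structure alone bounds $t(S)$ by the number of generators (indeed $t(S)$ can approach $n_1-1$ without the almost symmetric hypothesis). You need a mechanism that actually exploits the pairing $f_i+f_{t-i}=F$; simply counting maximal elements does not give one. Your proposed mechanism---writing $F+n_j=f_i+(f_{t-i}+n_j)$ in two ways and deducing a relation $n_\ell=\sum_{k\ne\ell}c_kn_k$---is not justified: adding two factorizations gives a factorization of $F+n_j$, but it is unclear why this forces nonnegative coefficients that violate minimality. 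The paper extracts information from such sums in a much more controlled way, by observing that if one adds the $i$-th row of an RF-matrix for $f$ to the $j$-th row of an RF-matrix for $F-f$ one obtains a representation of $F$ in which only the $i$-th and $j$-th coefficients can be negative; since $F\notin S$, at least one of $a_{ij},b_{ji}$ must vanish. This zero-placement constraint, not a direct relation among generators, is what drives the whole argument.

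The paper's route is genuinely different from what you outline. It introduces for each $f\in\mathrm{PF}(S)$ an $e\times e$ \emph{RF-matrix}, proves the vanishing $a_{ij}b_{ji}=0$ above, and then runs a counting argument on zeros: each row with exactly two zeros forces $f=\lambda_{ji}n_i-n_j$ for some maximal $\lambda_{ji}$, and there are only $12$ such slots $M_{ij}$ when $e=4$. A sequence of lemmas (ruling out a zero column, ruling out three $M_{ji},M_{ki},M_{hi}$ with common $i$, and forcing extra zeros via Proposition~\ref{zeroplace}) pushes the bound down from $t\le 7$ to $t\le 5$ to $t\le 4$. The final step $t\ne 4$ is indeed the most delicate, as you anticipated, but it is handled by showing that the RF-matrix $C$ for $F/2$ is forced into a unique shape and then deriving a numerical contradiction among the $\lambda_{ij}$. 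Your Ap\'ery-set framework is not obviously wrong, but as written it lacks the structural lemmas that do the real work; you would need analogues of the paper's zero-counting and of Lemmas~\ref{zeromatrix}--\ref{triplet} before the $t=4$ case analysis can even begin.
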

Many examples are present in literature (cf. \cite{N}) of almost Gorenstein monomial curves $ \mathcal C \subseteq \mathbb{A}^4$ with type $3$; therefore, this bound is sharp. 

The first section of this paper is devoted to proving Theorem \ref{main}, while in Section 2 we provide computational evidence and theoretical considerations for higher embedding dimensions. To simplify the exposition we will use the language of numerical semigroups (cf. \cite{RG}). Given the correspondence between numerical semigroups and monomial curves (cf. \cite{BDF}), in order to prove Theorem $1$ it suffices to prove that the type of a $4$-generated almost symmetric numerical semigroup is at most $3$.

\section{Main result}
Denote by $\mathbb{Z}$ and $\mathbb N$ the set of integers and nonnegative integers respectively. Given $e \ge 2$ and $n_1,n_2,\ldots,n_e\in\mathbb{N}$ such that $\gcd(n_1,n_2,\ldots,n_e)=1$, the \emph{numerical semigroup} generated by $\{n_1,n_2,\ldots,n_e\}$ is the set $$S=\langle n_1,n_2,\ldots,n_e\rangle=\{a_1n_1+a_2n_2+ \ldots + a_en_e \mid a_i \in\mathbb{N} 
\},$$ which is a submonoid of $(\mathbb N,+)$ such that $\mathbb{N} \setminus S$ is finite. With the notation $S=\langle n_1,n_2,\ldots,n_e\rangle$ we will assume that $\{n_1,n_2,\ldots,n_e\}$ is a minimal generating system for $S$; we will say that $e$ is the \emph{embedding dimension} of $S$, denoted by $e(S)$. We also denote by $F(S)$ the \emph{Frobenius number} of $S$, that is, $F(S)=\max \mathbb{Z} \setminus S$, and by $PF(S)$ the set of \emph{pseudo-Frobenius} numbers of $S$, $$PF(S) = \{x \not \in S \mid x+s \in S \text{ for every } s \in S \setminus \{0\}\}=\{x \not \in S \mid x+n_i \in S \text{ for every } i=1,\ldots,e\},$$ whose cardinality is called the \emph{type} of $S$, denoted by $t(S)$.

Let $\le_S$ be the relation defined by $x \le_S y$ if $y-x \in S$. It is easy to see that $(\mathbb{Z},\le_S)$ is a partially ordered set, and that the pseudo-Frobenius numbers of $S$ are the maximal elements of the poset $(\mathbb{Z} \setminus S, \le_S)$.

We say that a numerical semigroup $S$ is \emph{almost symmetric} (cf. \cite{BF}) if for every $x \in \mathbb{Z} \setminus S$ such that $F(S)-x \not \in S$ we have $\{x,F(S)-x\} \subseteq PF(S)$. 

We introduce some new objects associated to pseudo-Frobenius numbers, whose properties shed some light on the behaviour of almost symmetric numerical semigroups with $e(S)=4$, while also giving some insight on the generic case.
 
Notice that if $f \in PF(S)$, then $f + n_i \in S$ for every $i=1,\ldots,e$, hence there exist $a_{i1},\ldots,a_{ie} \in \mathbb{N}$ such that $$f+n_i=\sum_{j=1}^{e}a_{ij}n_j.$$
However $a_{ii} > 0$ would imply $f \in S$; thus $a_{ii}=0$. Thus, for every $i$, there exist $a_{i1},\ldots,a_{ie} \in \mathbb{N}$  such that $f=\sum_{j = 1}^{e} a_{ij}n_j$ and $a_{ii}=-1$.

\begin{definition}
Let $S = \langle n_1,\ldots,n_e \rangle$ be a numerical semigroup and $f \in PF(S)$. We say that $A=(a_{ij}) \in M_e(\mathbb{Z})$
is an \textbf{RF-matrix} (short for row-factorizazion matrix) for $f$ if $a_{ii}=-1$ for every $i=1,2,\ldots,e$, $a_{ij} \in \mathbb{N}$ if $i \neq j$ and for every $i=1,\ldots,e$ $$\sum_{j=1}^{e} a_{ij}n_j = f.$$
\end{definition}
\noindent

Notice that if $S$ is almost symmetric and $f \in PF(S) \setminus \{F(S)\}$, there exists an RF-matrix for both $f$ and $F(S)-f$. However, in general this matrix is not unique.
\begin{example}\label{example}
Consider the numerical semigroup $S= \langle 6,7,9,10 \rangle$. The pseudo-Frobenius numbers of $S$ are $PF(S)= \{3,8,11\}$, and thus $S$ is almost symmetric with type $3$. Now, take $8 \in PF(S)$. We have $8=3 \cdot 6 - 10 = 2 \cdot 9 - 10$, and thus the matrices
$$A_1=\begin{pmatrix}
-1 & 2 & 0 & 0 \\
1 & -1 & 1 & 0 \\
0 & 1 & -1 & 1 \\
3 & 0 & 0 & -1 \\ \end{pmatrix}, \ \ \ A_2=\begin{pmatrix}
-1 & 2 & 0 & 0 \\
1 & -1 & 1 & 0 \\
0 & 1 & -1 & 1 \\
0 & 0 & 2 & -1 \\ \end{pmatrix}$$
are both RF-matrices for $8$. 
	
\end{example}

\noindent
Notice that, in the example, $M_2$ and $M_1$ differ only by the fourth row, and that the two rows $(3,0,0,-1)$ and $(0,0,2,-1)$ are obtained from factorizations of $8+10$ as $3 \cdot 6$ and $2 \cdot 9$.

In the general case $S=\langle n_1, \ldots, n_e \rangle$, the $i$-th row of an RF-matrix for $f$ is associated to a factorization of $f+n_i$ in $S$; thus, denoting with $\mathsf Z (s)$ the set of factorizations of an element $s \in S$ as a linear combination of the minimal generators of $S$, we can choose the $i$-th row of an RF-matrix for $f$ in $|\mathsf Z (f+n_i)|$ ways. Thus the number of RF-matrices for $f \in PF(S)$ is equal to $$\prod_{i=1}^{e} |\mathsf Z (f+n_i)|.$$

If two elements of $PF(S)$ are \textit{symmetric} (that is, their sum is equal to $F(S)$), their RF-matrices gain a nice property:

\begin{proposition}\label{prop}

Let $S=\langle n_1,\ldots,n_e \rangle$ be a numerical semigroup, and let $f \in PF(S) \setminus \{F(S)\}$ be such that $F(S)-f \in PF(S)$. Let $A=(a_{ij})$ be an RF-matrix for $f$ and $B=(b_{ij})$ be an RF-matrix for $F(S)-f$. Then for every $i \neq j$ we have $a_{ij}b_{ji} = 0$.
\end{proposition}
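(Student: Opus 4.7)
The plan is to argue by contradiction. Suppose there exist indices $i \neq j$ with $a_{ij} \geq 1$ and $b_{ji} \geq 1$, and I aim to deduce that $F(S)$ lies in $S$, which is absurd.

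The natural tool is to combine the row-$i$ equation of $A$ with the row-$j$ equation of $B$. From the definition of an RF-matrix I have
\begin{equation*}
f + n_i = \sum_{k \neq i} a_{ik} n_k \qquad \text{and} \qquad F(S) - f + n_j = \sum_{k \neq j} b_{jk} n_k,
\end{equation*}
where all coefficients on the right are nonnegative integers. Adding these two identities cancels the $f$ and yields
\begin{equation*}
F(S) + n_i + n_j = \sum_{k \neq i} a_{ik} n_k + \sum_{k \neq j} b_{jk} n_k.
\end{equation*}

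The key observation is that the coefficient $a_{ij}$ multiplies $n_j$ in the first sum and $b_{ji}$ multiplies $n_i$ in the second sum. By the contradiction hypothesis, both of these coefficients are at least $1$, so I may subtract $n_i + n_j$ from both sides and still keep nonnegative coefficients on the right, namely replacing $a_{ij}$ by $a_{ij} - 1$ and $b_{ji}$ by $b_{ji} - 1$. This writes $F(S)$ as a nonnegative $\mathbb{N}$-linear combination of $n_1, \ldots, n_e$, which contradicts $F(S) \notin S$.

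I do not foresee a serious obstacle here: the only thing to be careful about is the bookkeeping of which coefficients can absorb the subtraction of $n_i + n_j$, and the fact that once $a_{ij}$ and $b_{ji}$ are both at least $1$ this bookkeeping works automatically. The statement follows for all pairs $i \neq j$ simultaneously since the argument treats an arbitrary such pair.
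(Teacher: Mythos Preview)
Your proof is correct and is essentially the same as the paper's: both add row $i$ of $A$ to row $j$ of $B$ and observe that if $a_{ij}$ and $b_{ji}$ were both positive then $F(S)$ would be expressed as a nonnegative combination of the generators, contradicting $F(S)\notin S$. The only cosmetic difference is that the paper keeps the diagonal entries $a_{ii}=b_{jj}=-1$ in the sums (so the coefficients of $n_i$ and $n_j$ come out as $b_{ji}-1$ and $a_{ij}-1$ directly), whereas you move them to the other side first and then subtract $n_i+n_j$ back.
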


\begin{proof}

Let $i,j \in \{1,\ldots,e\}$, $i \neq j$. Consider the $i$-th row of $A$ and the $j$-th row of $B$. By adding them we get
$$F(S)=f+(F(S)-f)=\sum_{p=1}^{e} a_{ip}n_p + \sum_{q=1}^{e}b_{jq}n_q=\sum_{h=1}^{e} (a_{ih}+b_{jh})n_h.$$
 
If $h \neq i,j$, it is clear that $a_{ih},b_{jh} \in \mathbb{N}$ and thus $a_{ih}+b_{jh} \in \mathbb{N}$, while the coefficients of $n_i$ and $n_j$ are respectively $b_{ji}-1$ and $a_{ij}-1$. Since $F(S) \not \in S$, these coefficients cannot be both non-negative: since $b_{ji},a_{ij} \in \mathbb{N}$ we necessarily have $b_{ji}=0$ or $a_{ij}=0$, that is $b_{ji}a_{ij}=0$. 
\end{proof}
Given $i,j \in \{1,\ldots, e(S)\}$, $i \neq j$, denote $$\lambda_{ij}=max\{K \in \mathbb{N} \ | \ Kn_j-n_i \not \in S \}, \ \ M_{ij}=\lambda_{ij}n_j-n_i \not \in S.$$ 
Denote by $\Lambda$ the multiset $\Lambda=\{M_{ij} | i \neq j\}$.
It is trivial to see that $K n_j - n_i \not \in S$ if $K \le \lambda_{ij}$ and $Kn_j-n_i \in S$ if $K > \lambda_{ij}$.

Let $f \in PF(S) \setminus \{F(S)\}$. Define the multiset $$\Gamma_f = \{ M_{ij} \in \Lambda \ | \ M_{ij} = f  \}$$ and let $\Gamma$ be the union of $\Gamma_f$ for every $f \in PF(S) \setminus \{F(S)\}$.

\begin{proposition}\label{unique}
Let $S$ be a numerical semigroup and $f \in PF(S)$ be such that $f=a n_j - n_i$, $j \neq i$.
Then $a= \lambda_{ij}$.
Furthermore, if $f,f' \in PF(S) \setminus \{F(S)\}$, $f \neq f'$, then $\Gamma_f \cap \Gamma_{f'} = \emptyset$.
\end{proposition}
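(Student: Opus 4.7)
My plan is to treat the two assertions of the proposition separately; each one reduces immediately to the definitions, using the dichotomy ``$K n_j - n_i \in S \iff K > \lambda_{ij}$'' stated just before the proposition.

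For the equality $a = \lambda_{ij}$, I would establish the two inequalities. Since $f \in PF(S)$ we have $f \notin S$, and because $f = a n_j - n_i$ the dichotomy immediately gives $a \le \lambda_{ij}$. For the reverse inequality, I would use the defining property of pseudo-Frobenius numbers: $f + n_j \in S$. Rewriting $f + n_j = (a+1) n_j - n_i$ and invoking the same dichotomy produces $a + 1 > \lambda_{ij}$, hence $a \ge \lambda_{ij}$. Combining the two bounds yields the claim, and in fact shows $f = M_{ij}$.

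For the disjointness $\Gamma_f \cap \Gamma_{f'} = \emptyset$, my approach is essentially tautological: $\Lambda$ is a multiset indexed by ordered pairs $(i,j)$ with $i \ne j$, and for each such pair the integer $M_{ij}$ is uniquely determined. Consequently, if some element $M_{ij}$ were to lie in both $\Gamma_f$ and $\Gamma_{f'}$, the defining condition of each multiset would force $f = M_{ij} = f'$, contradicting $f \ne f'$.

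I do not anticipate a serious obstacle: the proof is essentially an unfolding of the definitions of $PF(S)$ and $\lambda_{ij}$. The only points that require some care are interpreting the intersection of $\Gamma_f$ and $\Gamma_{f'}$ in the correct multiset sense, and keeping in mind that the bound obtained from $(a+1) n_j - n_i \in S$ is $a + 1 > \lambda_{ij}$, which translates to $a \ge \lambda_{ij}$ rather than to a strict inequality.
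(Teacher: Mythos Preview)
Your proof is correct and follows essentially the same approach as the paper: both obtain $a \le \lambda_{ij}$ from $f \notin S$, and then use the pseudo-Frobenius property to force equality. The only cosmetic difference is that the paper phrases the reverse inequality via poset maximality (from $f \le_S M_{ij}$ and $M_{ij} \notin S$ conclude $f = M_{ij}$), whereas you apply the defining condition $f + n_j \in S$ directly to get $a+1 > \lambda_{ij}$; the disjointness of the $\Gamma_f$ is treated as obvious in both.
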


\begin{proof}

Since $f \in PF(S)$ it follows that $f=an_j-n_i \not \in S$, therefore $a \le \lambda_{ij}$ and $f \le_S f + (\lambda_{ij}-a)n_j = M_{ij}$. However, from $M_{ij} \not \in S$ and $f \in PF(S)$ we deduce $f=M_{ij}$, that is $a=\lambda_{ij}$. The second part is obvious.
\end{proof}

In the rest of this Section we will consider the case of almost-symmetric numerical semigroups $S$ with $e(S)=4$.
The main idea is a counting argument on the number and placement of zeroes in RF-matrices; we will use the next lemmas to relate the sets $PF(S) \setminus \{F(S)\}$ and $\Lambda$ (we know that $|\Lambda|=12$ and $\Gamma \subseteq \Lambda$), thus bounding $t(S)$. In fact, if $A$ is an RF-matrix for $f \in PF(S) \setminus \{F(S)\}$, then each row of $A$ with exactly two zeroes gives an element of $\Lambda$ equal to $f$. However, $\Gamma_f$ may contain more elements than such rows of $A$: for example, in the numerical semigroup $S=\langle 6,7,9,10 \rangle$ considered in Example \ref{example}, the two RF-matrices $A_1$ and $A_2$ for $8$ both contain two rows with exactly two zeros, but clearly $Gamma_8$ contain more than two elements.

 In the proof of the next Lemma, we will use the well-known fact (cf. \cite{RG}, Corollary 10.22) that the type of any $3$-generated numerical semigroup is at most $2$.
\begin{lemma}\label{zeromatrix}
Let $S=\langle n_1,n_2,n_3,n_4 \rangle$ be an almost symmetric numerical semigroup. Let $f \in PF(S) \setminus \{F(S)\}$, and let $M$ be an RF-matrix for $f$ such that in a column of $M$ there is no positive element. Then $f=\frac{F(S)}{2}$.
\end{lemma}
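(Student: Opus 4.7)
My plan is to argue by contradiction: assume $f\ne F(S)/2$ and set $g:=F(S)-f$, so $g\in PF(S)\setminus\{F(S)\}$ with $g\ne f$ by the almost-symmetric hypothesis. Without loss of generality the vanishing column of $M$ is column~$4$, so $a_{i4}=0$ for $i\in\{1,2,3\}$. Fix any RF-matrix $B=(b_{ij})$ for $g$; Proposition~\ref{prop} gives $a_{ij}b_{ji}=0$ for every $i\ne j$. Writing $N_i:=\{j\ne i:a_{ji}=0\}$, row~$i$ of $B$ is supported in $N_i$, so $g+n_i\in\langle n_j:j\in N_i\rangle$, and since $g+n_i>0$ I need $N_i\ne\emptyset$ for every $i$. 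A key observation is that each row of the off-diagonal $3\times 3$ block $(a_{ij})_{i,j\in\{1,2,3\}}$ contains at most one zero: otherwise the corresponding row of $M$ collapses to $f+n_i=0$.

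Two contradiction mechanisms will drive the argument. The first (``minimality'') is that any identity of the form $f=M_{i_1,j}=M_{i_2,j}$ or $g=M_{i_1,j}=M_{i_2,j}$ with $i_1\ne i_2$ gives $n_{i_1}-n_{i_2}=(\lambda_{i_2,j}-\lambda_{i_1,j})n_j$, which places one of $n_{i_1},n_{i_2}$ in a submonoid generated by $n_j$ and the other, contradicting the minimality of $\{n_1,n_2,n_3,n_4\}$. The second (``coincidence'') is that if some row of $M$ with a unique positive off-diagonal entry pins down $f=M_{i,j}$ while the row of $B$ with $N_i=\{j\}$ pins down $g=M_{i,j}$ at the \emph{same} pair $(i,j)$, then $f=g$, against assumption. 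To apply either mechanism I rely on Proposition~\ref{unique}, which identifies $f$ or $g$ with a specific $M_{i,j}$ whenever the relevant row has a single positive entry.

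The proof is then completed by a case analysis on the zero pattern of the $3\times 3$ off-diagonal block of $M$. If the block is strictly positive, then $N_i\cap\{1,2,3\}=\emptyset$ for $i\in\{1,2,3\}$, forcing $a_{4i}=0$ for all such $i$ and making row~$4$ of $M$ vanish. If the block has exactly one zero at $(i_0,j_0)$, row~$i_0$ of $M$ pins $f=M_{i_0,j'}$ for the other column; column arguments force $a_{4i}=0$ for $i\ne j_0$, so $a_{4j_0}>0$, and the rows of $B$ indexed by $i\in\{1,2,3\}\setminus\{j_0\}$ each have $N_i=\{4\}$, yielding $g=M_{i_1,4}=M_{i_2,4}$ and triggering mechanism~(I). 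When the block has two or three zeros (at most one per row), inspection of the finitely many patterns shows that in each one either two rows of $M$ (possibly including row~$4$, once its support is forced by the column constraints) pin $f$ to two $M_{i,j}$'s sharing a column index---triggering~(I)---or else a row of $B$ produces $g=M_{i,j}$ matching an $f=M_{i,j}$ identification coming from a row of $M$, triggering~(II).

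The principal obstacle I anticipate is organising this case analysis without gaps; the ``at most one zero per row'' restriction keeps the enumeration small, and the interplay between the forced zeros in $M$ (from the row-nonzero and $N_i\ne\emptyset$ constraints on all four rows and columns) and the forced zeros in $B$ (from Proposition~\ref{prop}) consistently routes every admissible configuration into mechanism~(I) or~(II). Once all patterns have been ruled out, the assumption $f\ne F(S)/2$ becomes untenable, which completes the proof.
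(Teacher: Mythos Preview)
Your approach is correct in outline and genuinely different from the paper's. The paper does not touch the RF-matrix $B$ for $g=F(S)-f$ at all: instead it observes that a vanishing column (say the first) forces $\gcd(n_2,n_3,n_4)=1$, so $T=\langle n_2,n_3,n_4\rangle$ is a numerical semigroup with $f\in PF(T)$. Since $T$ is $3$-generated it has type at most $2$, hence $PF(T)=\{f,F(T)\}$; a short $\le_T$ argument then yields $F(S)-f\le_S f$, whence $F(S)-f=f$. This is a clean structural reduction to the known $3$-generated bound, avoiding any case analysis.

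Your route stays entirely inside the $4$-generated combinatorics, using only Proposition~\ref{prop} and Proposition~\ref{unique} together with the minimality of the generating set. The two ``mechanisms'' are sound, and the constraint $N_i\neq\emptyset$ correctly forces zeros into row~$4$ of $M$; spot-checking the cyclic three-zero patterns and the various two-zero patterns confirms that each configuration does collapse to (I) or (II) as you claim. The trade-off is clear: the paper's argument is shorter and conceptual but imports the $t\le 2$ theorem for embedding dimension~$3$, whereas yours is self-contained but requires the case enumeration to be written out in full. As submitted, the phrase ``inspection of the finitely many patterns shows'' is doing real work that a reader cannot verify; you should list the patterns (up to the evident symmetry) and dispatch each one explicitly, since the two- and three-zero cases split into several sub-cases depending on whether the zeros share a column and on which entries of row~$4$ are forced to vanish.
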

\begin{proof}
Assume without loss of generality that $$M=\begin{pmatrix}
-1 & m_{12} & m_{13} & m_{14} \\
0 & -1 & m_{23} & m_{24} \\
0 & m_{32} & -1 & m_{34} \\
0 & m_{42} & m_{43} & -1 \\ \end{pmatrix}. $$
Let $d=\gcd(n_2,n_3,n_4)$. By making the first two rows of $M$ equal we obtain $$f=-n_1+m_{12}n_2+m_{13}n_3+m_{14}n_4=-n_2+m_{23}n_3+m_{24}n_4,$$ hence  $d | f$, and $d | n_1$; that implies $d | \gcd(n_1,n_2,n_3,n_4)=1$, and $d=1$. Thus $T=\langle n_2,n_3,n_4 \rangle$ is also a numerical semigroup, and by checking the RF-matrix $M$ we can deduce that $f \not \in T$ and $f+n_2,f+n_3,f+n_4 \in T$, that is $f \in PF(T)$. $T \subseteq S$ implies $F(T) \ge F(S)$, thus $f \neq F(T)$. Then $t(T) = 2$ and $\{f,F(T)\} = PF(T)$.

Consider now $F(T)-f$. Since $f \in PF(T) \setminus \{F(T)\}$, we have $F(T)-f \not \in T$; by definition of pseudo-Frobenius number we must have either $F(T)-f \le_T F(T)$ or $F(T)-f \le_T f$. Since $F(T)-f \le_T F(T)$ would imply the contradiction $f \in T$, we can deduce $F(T)-f \le_T f$.

On the other hand, considering $F(S) \not \in T$, since $f \in PF(S) \setminus \{F(S)\}$, $F(S) > f$ implies  $F(S) \le_T F(T)$, and thus $F(T)-F(S) \in T$.
But since $F(S)-f \not \in S$, we have $F(S)-f \not \in T$, and  $$F(S)-f \le_T F(S)-f +(F(T)-F(S))=F(T)-f \le_T f,$$ that is $F(S)-f \le_T f$, and clearly $F(S)-f \le_S f$. Finally, since $F(S)-f \in PF(S)$ we must have $F(S)-f=f$, and thus our claim.
\end{proof}

\begin{lemma} \label{zeronumber}
Let $S=\langle n_1,n_2,n_3,n_4 \rangle$ be almost symmetric, $f \in PF(S) \setminus \{F(S)\}$, and let $A$ and $B$ be RF-matrices respectively for $f$ and $F(S)-f$.
Then there are at most $|\Gamma_f| + |\Gamma_{F(S)-f}| + 8$ zeroes in $A \cup B$.
\end{lemma}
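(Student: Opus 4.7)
The plan is to count zeros row-by-row in each of the matrices $A$ and $B$, then use Proposition \ref{unique} to relate rows with two off-diagonal zeros to elements of $\Gamma_f$ and $\Gamma_{F(S)-f}$.

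First, I would observe that for each row $i$ of $A$,
$$\sum_{j\neq i} a_{ij} n_j = f + n_i > 0,$$
so at least one off-diagonal entry of that row is strictly positive. Consequently, at most two of the three off-diagonal entries in any row of $A$ (and of $B$) can equal zero. The positivity $f > 0$ is guaranteed by the hypothesis that $F(S)-f \in PF(S)$, since if $f \le 0$ one would have $F(S)-f \ge F(S)$ and every integer strictly greater than $F(S)$ lies in $S$, contradicting $F(S)-f \in PF(S)\subseteq \mathbb{Z}\setminus S$.

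Second, if row $i$ of $A$ has exactly two off-diagonal zeros, then there is a unique column $j \neq i$ with $a_{ij} > 0$, so $f = a_{ij} n_j - n_i$. By Proposition \ref{unique}, this forces $a_{ij} = \lambda_{ij}$ and $M_{ij} = f$; in other words, the pair $(i,j)$ contributes an element to $\Gamma_f$. Different rows yield pairs with different first index, hence distinct elements of $\Gamma_f$. Writing $r_k$ for the number of rows of $A$ with exactly $k$ off-diagonal zeros, we deduce $r_2 \le |\Gamma_f|$. Since $r_0+r_1+r_2 = 4$, the total number of zeros in $A$ equals
$$r_1 + 2r_2 \;=\; (r_0+r_1+r_2) + (r_2 - r_0) \;=\; 4 + r_2 - r_0 \;\le\; 4 + |\Gamma_f|.$$
The identical argument applied to $B$ yields at most $4 + |\Gamma_{F(S)-f}|$ zeros in $B$, and summing the two estimates gives exactly the stated bound.

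The argument is essentially a compact counting; the only mildly subtle point is ruling out rows that are entirely zero off the diagonal, handled by the positivity argument above. Note that Lemma \ref{zeromatrix} (a column-oriented statement) is not required for this particular estimate, though it will be used elsewhere in combination with the present lemma to carry out the final bound on $t(S)$.
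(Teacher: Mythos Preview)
Your proof is correct and follows essentially the same counting argument as the paper: both rule out rows with three off-diagonal zeros, identify rows with exactly two zeros with elements of $\Gamma_f$ or $\Gamma_{F(S)-f}$ via Proposition~\ref{unique}, and then maximize $m_1 + 2m_2$ subject to $m_1 + m_2 \le 8$ and $m_2 \le |\Gamma_f| + |\Gamma_{F(S)-f}|$. The only cosmetic difference is that you bound the zeros in $A$ and in $B$ separately (each by $4 + |\Gamma_\bullet|$) before summing, whereas the paper combines the eight rows of $A \cup B$ from the outset; the substance is identical.
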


\begin{proof}
Since $f \in \mathbb{N}$ there cannot be a row of $A$ or $B$ with three zeroes: thus, denoting with $m_1$ the number of rows of both $A$ e $B$ having exactly one zero and $m_2$ the number of rows of both $A \cup B$ having exactly two zeroes, $m_1+m_2 \le 8$. Also, there are exactly $m_1+2m_2$ zeroes in $A \cup B$. 
Notice that if a row of either $A$ or $B$ has exactly two zeroes, then by Proposition \ref{unique} this row corresponds to an element of $\Lambda$ equal to $f$ or $F(S)-f$. Then $m_2 \le |\Gamma_f| + |\Gamma_{F(S)-f}|$, $m_1 \le 8 - m_2$. The maximum possible value of $m_1+2m_2$ under these restrictions is $|\Gamma_f| + |\Gamma_{F(S)-f}| + 8$, that is our conclusion.
\end{proof}

\begin{lemma} \label{onetwo}
Let $S=\langle n_1,n_2,n_3,n_4 \rangle$ be almost symmetric, and $f \in PF(S) \setminus \{F(S)\}$.
Then $|\Gamma_f| + |\Gamma_{F(S)-f}| \ge 4$. Moreover $|\Gamma| \ge 2|PF(S)\setminus \{F(S)\}| = 2(t(S)-1)$.
\end{lemma}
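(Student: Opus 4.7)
The plan is to combine Proposition \ref{prop} (which forces many zeroes to appear in any pair of RF-matrices for $f$ and $F(S)-f$) with Lemma \ref{zeronumber} (which bounds the total number of zeroes from above in terms of $|\Gamma_f| + |\Gamma_{F(S)-f}|$). Setting the lower bound against the upper bound will immediately yield $|\Gamma_f| + |\Gamma_{F(S)-f}| \ge 4$.

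More precisely, I would first pick an RF-matrix $A$ for $f$ and an RF-matrix $B$ for $F(S)-f$ (which exists because $S$ is almost symmetric and $F(S) - f \in PF(S)$). The $24$ off-diagonal entries of $A$ and $B$ split naturally into $12$ ordered pairs $\{a_{ij}, b_{ji}\}$ (one for each ordered pair $i \ne j$ with $i,j \in \{1,2,3,4\}$). By Proposition \ref{prop}, each such pair contains at least one zero, so $A \cup B$ contains at least $12$ zeroes. Lemma \ref{zeronumber} then tells us that the same count is at most $|\Gamma_f| + |\Gamma_{F(S)-f}| + 8$, so we conclude $|\Gamma_f| + |\Gamma_{F(S)-f}| \ge 4$.

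For the \emph{Moreover} part, Proposition \ref{unique} guarantees that the multisets $\Gamma_f$ are pairwise disjoint as $f$ varies in $PF(S)\setminus\{F(S)\}$, so $|\Gamma| = \sum_{f} |\Gamma_f|$. Partition $PF(S)\setminus\{F(S)\}$ into pairs of the form $\{f, F(S)-f\}$, possibly with a leftover singleton $\{F(S)/2\}$ if $F(S)$ is even and $F(S)/2 \in PF(S)$. Each honest pair contributes at least $4$ to the sum by the first part, accounting for $2$ elements; provided the singleton contributes at least $2$, summing gives $|\Gamma| \ge 2(t(S)-1)$ regardless of parity.

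The main obstacle is precisely this singleton case $f = F(S)/2$, since Lemma \ref{zeronumber} is phrased for a (generic) pair $A,B$. I would handle it directly: take a single RF-matrix $A$ for $f$ and apply Proposition \ref{prop} with $B = A$, which gives $a_{ij} a_{ji} = 0$ for all $i \ne j$. This forces at least one zero in each of the $6$ unordered pairs $\{i,j\}$, so $A$ has at least $6$ off-diagonal zeroes; since $f > 0$ no row of $A$ can carry three zeroes, so the $4$ rows of $A$ jointly contribute at most $2m_2 + m_1$ zeroes subject to $m_1 + m_2 \le 4$, forcing $m_2 \ge 2$. Hence at least two rows of $A$ have exactly two zeroes, and each such row gives an element of $\Gamma_f$ equal to $f$, i.e.\ $|\Gamma_{F(S)/2}| \ge 2$. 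This closes the argument.
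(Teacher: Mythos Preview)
Your argument is correct and follows the paper's proof essentially verbatim: choose RF-matrices $A,B$, use Proposition~\ref{prop} to get at least $12$ zeroes, and feed this into Lemma~\ref{zeronumber} to obtain $|\Gamma_f|+|\Gamma_{F(S)-f}|\ge 4$; then sum over symmetric pairs in $PF(S)\setminus\{F(S)\}$.

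The one place you do a little more work than necessary is the singleton case $f=F(S)/2$. You re-run the zero-counting argument on a single matrix, but in fact the first part of the lemma already covers it: for $f=F(S)/2$ we have $F(S)-f=f$, so $|\Gamma_f|+|\Gamma_{F(S)-f}|=2|\Gamma_{F(S)/2}|\ge 4$, hence $|\Gamma_{F(S)/2}|\ge 2$ immediately. This is exactly how the paper handles it (the phrase ``noticing that for $f=\frac{F(S)}{2}$ we have $|\Gamma_f|\ge 2$''), and Lemma~\ref{zeronumber} applies without modification since nothing in its statement or proof prevents $A$ and $B$ from being RF-matrices for the same pseudo-Frobenius number. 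Your direct argument is of course valid too, just redundant.
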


\begin{proof}
Let $A=(a_{ij})$ be an RF-matrix for $f$ and $B=(b_{ij})$ an RF-matrix for $F(S)-f \in PF(S) \setminus \{F(S)\}$. In these matrices there are $12$ pairs of elements of the form $b_{ij},a_{ji}$: then by Proposition \ref{prop} there are at least $12$ zeroes among the elements of $A$ and $B$. Thus by Lemma \ref{zeronumber} it follows $12 \le |\Gamma_f| + |\Gamma_{F(S)-f}| + 8$, that is $|\Gamma_f| + |\Gamma_{F(S)-f}| \ge 4$.

Since the various $\Gamma_f$ are disjoint, noticing that for $f=\frac{F(S)}{2}$  we have $|\Gamma_f| \ge 2$, by adding the various $|\Gamma_f|$ and pairing $|\Gamma_f|$ with $|\Gamma_{F(S)-f}|$ we obtain $|\Gamma| \ge 2|PF(S)\setminus \{F(S)\}| = 2(t(S)-1)$.
\end{proof}

In the previous Lemma we related the zeroes of a pair of RF-matrices with elements of $\Gamma$ and with the type $t(S)$. This result alone provides a first bound for $t(S)$: in fact, since $\Gamma \subseteq \Lambda$ it is clear that $|\Lambda| = 12 \ge 2(t(S)-1)$, thus we can deduce $t(S) \le 7$.

Proposition \ref{prop} guarantees the existence of at least $12$ zeroes in $A \cup B$. Next, we will see that this bound can be improved depending on the behaviour of the elements of $\Gamma$.

\begin{lemma}\label{zeroplace}

Let $S= \langle n_1,n_2,n_3,n_4 \rangle$ and consider $f,f' \in PF(S) \setminus \{F(S)\}$, with $$f=M_{ji}=\lambda_{ji} n_i-n_j, \ \ f'=M_{ki}=\lambda_{ki}n_i-n_k,\ \  \lambda_{ji} \ge \lambda_{ki}$$ for three distinct indexes $i,j,k \in \{1,2,3,4\}$. \\
Let $A=(a_{pq})$ be an RF-matrix for $F(S)-f$. Then $a_{kj}=0$.
\end{lemma}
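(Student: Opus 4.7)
The plan is to argue by contradiction: assume $a_{kj}\ge 1$ and derive that $F(S)-f'\in S$, which is impossible since $f'\in PF(S)\setminus\{F(S)\}$ forces $F(S)-f'$ to be a positive integer outside $S$. The hypothesis $\lambda_{ji}\ge\lambda_{ki}$ will enter only at the very last step, precisely to guarantee that a certain correction term lies in $S$.

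Concretely, I would start from the $k$-th row of $A$, which reads
$$a_{ki}n_i+a_{kj}n_j+a_{kl}n_l=F(S)-f+n_k,$$
where $l$ is the remaining index with $\{i,j,k,l\}=\{1,2,3,4\}$. Assuming $a_{kj}\ge 1$ I may subtract a single $n_j$ from both sides to obtain
$$a_{ki}n_i+(a_{kj}-1)n_j+a_{kl}n_l=F(S)-f+n_k-n_j,$$
whose left-hand side is a nonnegative integer combination of generators of $S$ and therefore belongs to $S$. Substituting $f=\lambda_{ji}n_i-n_j$ and $f'=\lambda_{ki}n_i-n_k$ on the right and simplifying yields the identity
$$F(S)-f+n_k-n_j=F(S)-f'-(\lambda_{ji}-\lambda_{ki})n_i,$$
so this quantity lies in $S$.

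Since $\lambda_{ji}-\lambda_{ki}\ge 0$, the term $(\lambda_{ji}-\lambda_{ki})n_i$ is itself in $S$, and adding it gives $F(S)-f'\in S$. To close the contradiction I would then observe that $f'\in PF(S)\setminus\{F(S)\}$ implies both $F(S)-f'>0$ (since $f'<F(S)$) and $F(S)-f'\notin S$: if $F(S)-f'$ were a nonzero element of $S$, the pseudo-Frobenius property of $f'$ would force $F(S)=f'+(F(S)-f')\in S$, contradicting $F(S)\notin S$.

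The only nontrivial step is spotting the algebraic rearrangement $F(S)-f+n_k-n_j=F(S)-f'-(\lambda_{ji}-\lambda_{ki})n_i$; once that identity is in hand, the hypothesis $\lambda_{ji}\ge\lambda_{ki}$ and the pseudo-Frobenius property of $f'$ combine directly to produce the contradiction. Note that the argument uses neither Proposition \ref{prop} nor the almost symmetry of $S$ beyond the bare assumption that $F(S)-f\in PF(S)$ (so that $A$ exists) and $f'\in PF(S)\setminus\{F(S)\}$.
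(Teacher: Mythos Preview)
Your proof is correct and is essentially the same argument as the paper's: both assume $a_{kj}\ge 1$, use the $k$-th row of $A$ together with the expressions $f=\lambda_{ji}n_i-n_j$ and $f'=\lambda_{ki}n_i-n_k$, and deduce (via $\lambda_{ji}\ge\lambda_{ki}$) that $F(S)-f'\in S$, i.e.\ $f'\le_S F(S)$, contradicting $f'\in PF(S)\setminus\{F(S)\}$. The only cosmetic difference is that the paper adds $f$ to the row-sum and reads off $f'\le_S F(S)$ directly, whereas you first isolate $F(S)-f'-(\lambda_{ji}-\lambda_{ki})n_i\in S$ and then add the correction term.
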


\begin{proof}

Assume that $a_{kj} \neq 0$. Taking the $k$-th row of $A$ we get  $$F(S)=f+(F(S)-f)=\lambda_{ji}n_i-n_j+a_{ki}n_i+a_{kj}n_j-n_k+a_{kh}n_h=$$ $$=(\lambda_{ji}+a_{ki})n_i+(a_{kj}-1)n_j-n_k+a_{kh}n_h.$$
Since $\lambda_{ji}+a_{ki} \ge \lambda_{ji} \ge \lambda_{ki}$ and $a_{kj}-1 \ge 0$, it follows $f'=\lambda_{ki}n_i-n_k \le_S F(S)$, that is a contradiction.
\end{proof}

The meaning of Proposition \ref{zeroplace} is that for each $M_{ji},M_{ki} \in \Gamma$ we can find a pair $a_{jk},b_{kj}$ such that $a_{jk}=b_{kj}=0$, thus adding one more zero to the lower bound on the nuumber of zeroes provided by Proposition \ref{prop}. The next step concerns possible configurations of the elements of $\Gamma$.

\begin{proposition}\label{triplet}
Let $S=\langle n_1,n_2,n_3,n_4 \rangle$ be almost symmetric. Then there exist no distinct $f,f',f'' \in PF(S) \setminus \{F(S)\}$ such that
$$f=M_{ji}=\lambda_{ji} n_i-n_j, \ \ f'=M_{ki}=\lambda_{ki}n_i-n_k, \ \ f''=M_{hi}=\lambda_{hi}n_i-n_h,\ \  \{i,j,k,h\}=\{1,2,3,4\}.$$
\end{proposition}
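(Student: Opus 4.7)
The plan is to proceed by contradiction: I assume three distinct elements $f = M_{ji}$, $f' = M_{ki}$, $f'' = M_{hi}$ of $PF(S)\setminus\{F(S)\}$ exist. After relabeling the generators, take $i = 1$, $j = 2$, $k = 3$, $h = 4$ and, permuting $n_2, n_3, n_4$ if necessary, $\lambda_{21} \ge \lambda_{31} \ge \lambda_{41}$.

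I first show that any RF-matrix $A = (a_{pq})$ for $F(S)-f$ has no positive entry in its second column. Lemma \ref{zeroplace} applied to the pairs $(f, f')$ and $(f, f'')$ --- whose hypotheses are met since $\lambda_{21}$ is the largest --- yields $a_{32} = a_{42} = 0$. Proposition \ref{prop}, together with the canonical RF-matrix $B$ for $f$ whose second row $(\lambda_{21}, -1, 0, 0)$ realizes $f + n_2 = \lambda_{21} n_1$ (so $b_{21} = \lambda_{21} > 0$), then forces $a_{12} = 0$. Combined with $a_{22} = -1$, Lemma \ref{zeromatrix} gives $F(S) - f = F(S)/2$, so $f = F(S)/2$ and $F(S) = 2\lambda_{21} n_1 - 2 n_2$.

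The analogous recipe run with $f'$ in place of $f$ gives $a'_{43} = 0$ (by Lemma \ref{zeroplace} on $(f', f'')$) and $a'_{13} = 0$ (by Proposition \ref{prop} and the canonical RF-matrix for $f'$) in any RF-matrix $A' = (a'_{pq})$ for $F(S) - f'$; exhibiting an $A'$ with $a'_{23} = 0$ would allow Lemma \ref{zeromatrix} to yield $F(S) - f' = F(S)/2 = f$, contradicting $f \ne f'$. Reading off row 2 of $A'$ gives
\[
a'_{21} n_1 + a'_{23} n_3 + a'_{24} n_4 = (2\lambda_{21} - \lambda_{31}) n_1 - n_2 + n_3,
\]
and if $a'_{23} \ge 1$ then subtracting $n_3$ would place $(2\lambda_{21} - \lambda_{31}) n_1 - n_2$ in $S$, which by the definition of $\lambda_{21}$ occurs iff $\lambda_{21} > \lambda_{31}$. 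When $\lambda_{21} = \lambda_{31}$ this is an immediate contradiction and the argument concludes.

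The principal technical obstacle is therefore the residual case $\lambda_{21} > \lambda_{31}$. Here I would exploit the auxiliary relation $(\lambda_{21} + 1) n_1 - n_2 \in \langle n_3, n_4 \rangle$ --- which itself follows from $f = F(S)/2$ upon applying Proposition \ref{prop} to a pair of RF-matrices for $f$, forcing the $n_2$-coefficient in every factorization of $f + n_1$ to vanish --- in order to perform a substitution in the row 2 equation of $A'$ and exhibit an alternative non-negative factorization with $a'_{23} = 0$. Carrying this through is likely to require a further sub-case analysis on the shape of the decomposition $(\lambda_{21}+1) n_1 - n_2 = c n_3 + d n_4$, together with additional constraints coming from the presence of $f''$ in $PF(S)$, before the proof is complete.
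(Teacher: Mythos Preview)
Your argument up to the conclusion $f = F(S)/2$ is correct and coincides with the paper's. The gap is in what you do afterward.

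You try to rerun the same reasoning on $f'$ so as to force $f' = F(S)/2$ as well, contradicting $f \ne f'$. But as you yourself observe, Lemma~\ref{zeroplace} does not supply the needed zero $a'_{23}=0$ directly, because $\lambda_{31}$ is not the largest of the three $\lambda$'s; and your handling of the residual case $\lambda_{21} > \lambda_{31}$ is only a sketch. You do not actually exhibit a factorization of $(F(S)-f')+n_2$ avoiding $n_3$, and you concede that further sub-case analysis using the decomposition $(\lambda_{21}+1)n_1 - n_2 = c n_3 + d n_4$ would still be required. As written, the proof is incomplete precisely at this point.

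The paper finishes much more directly. Once $f = F(S)/2$, any RF-matrix $G$ for $F(S)-f$ is simultaneously an RF-matrix for $f$, so Proposition~\ref{prop} may be applied with $A = B = G$, yielding $g_{pq}\,g_{qp}=0$ for all $p\ne q$; in particular $g_{34}\,g_{43}=0$. Replacing the second row of $G$ by $(\lambda_{21},-1,0,0)$ (legitimate since $f = \lambda_{21}n_1 - n_2$) and using the already-established $g_{12}=g_{32}=g_{42}=0$, one of rows $3$ or $4$ of $G$ now has exactly two zeros, so $f = g_{31}n_1 - n_3$ or $f = g_{41}n_1 - n_4$. Proposition~\ref{unique} then forces $g_{31}=\lambda_{31}$ (resp.\ $g_{41}=\lambda_{41}$), whence $f = M_{31}=f'$ or $f=M_{41}=f''$; equivalently, one reads off a relation $n_2 \in \langle n_1,n_3\rangle$ or $n_2 \in \langle n_1,n_4\rangle$ among the minimal generators. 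Either way this is the desired contradiction, and no case split on $\lambda_{21}$ versus $\lambda_{31}$ is needed.
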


\begin{proof}
Assume, without loss of generality, that $$f=M_{21}=\lambda_{21} n_1-n_2, \ \ f'=M_{31}=\lambda_{31}n_1-n_3, \ \ f''=M_{41}=\lambda_{41}n_1-n_4$$ and that $\lambda_{21} \ge \lambda_{31} \ge \lambda_{41}$.
Take $g=F(S)-f \in PF(S)$, and consider the RF-matrices $$F=\begin{pmatrix}
-1 & f_{12} & f_{13} & f_{14} \\
\lambda_{21} & -1 & 0 & 0 \\
f_{31} & f_{32} & -1 & f_{34} \\
f_{41} & f_{42} & f_{43} & -1 \\ \end{pmatrix}, \ \ \ G=\begin{pmatrix}
-1 & g_{12} & g_{13} & g_{14} \\
g_{21} & -1 & g_{23} & g_{24} \\
g_{31} & g_{32} & -1 & g_{34} \\
g_{41} & g_{42} & g_{43} & -1 \\ \end{pmatrix}$$  respectively for $f$ and $g$. 
Since $\lambda_{21} > 0$ it follows that $g_{12}=0$. Applying Proposition \ref{zeroplace} to the pairs $\{f,f'\}$ and $\{f,f''\}$ we obtain $g_{32}=g_{42}=0$.
Thus $G$ satisfies the hypotheses of Lemma \ref{zeromatrix}, hence $g=\frac{F(S)}{2}$, and $f=\frac{F(S)}{2}$.

$f=g$ implies that $G$ is an RF-matrix for both $g$ and $f$, thus applying Proposition \ref{prop} to $G$, considered as RF-matrix for $f$ and $g$, we obtain that $g_{ij} \neq 0$, which implies $g_{ji} = 0$.

Finally, since $f=g=\lambda_{21}n_1-n_2$ we can assume without loss of generality (up to switching the second rows of $F$ and $G$) that
$$G=\begin{pmatrix}
-1 & 0 & g_{13} & g_{14} \\
\lambda_{21} & -1 & 0 & 0 \\
g_{31} & 0 & -1 & g_{34} \\
g_{41} & 0 & g_{43} & -1 \\ \end{pmatrix}.$$ 
However, the implication on $G$ assures that at least one between $g_{34},g_{43}$ is zero.
Assuming $g_{43}=0$, we obtain $$f=\lambda_{21}n_1-n_2=g_{41}n_1-n_4,$$ and since $f=g_{41}n_1-n_4$ by Proposition \ref{unique} $g_{41}=\lambda_{41}$.
Therefore $\lambda_{21} \ge \lambda_{41} = g_{41}$, and $n_2=(\lambda_{21}-g_{41})n_1+n_4$, that is impossible.
Assuming $g_{34}=0$ a similar reasoning leads to another contradiction.
\end{proof}

Taking into account these results, we can improve our bound for $t(S)$.

\begin{proposition}\label{type4}
Let $S= \langle n_1,n_2,n_3,n_4 \rangle$ be an almost symmetric numerical semigroup. Then $t(S) \le 4$.
\end{proposition}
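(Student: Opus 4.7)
The plan is to argue by contradiction: assume $t(S) \ge 5$, so $T := |PF(S) \setminus \{F(S)\}| \ge 4$, and use the structural tools already developed to derive a contradiction.

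The first step is to sharpen the trivial bound $|\Gamma| \le |\Lambda| = 12$ to $|\Gamma| \le 8$. The key observation is that for any $f \in P := PF(S) \setminus \{F(S)\}$ and any multiplier index $i$, the multiplicity $\mu_{f,i} := |\{j \ne i : M_{ji} = f\}|$ is at most $1$: two distinct pairs $(j_1, i), (j_2, i) \in \Gamma_f$ would yield $n_{j_1} - n_{j_2} = (\lambda_{j_1 i} - \lambda_{j_2 i})\,n_i$, which (after ordering $n_{j_1} > n_{j_2}$) exhibits $n_{j_1}$ as a nonnegative combination of $n_{j_2}$ and $n_i$, contradicting the minimality of $n_{j_1}$ as a generator. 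Combined with Proposition \ref{triplet}, which bounds the number of distinct labels per column by $|D_i| \le 2$, we get $|\Gamma \cap \Lambda_i| \le |D_i| \le 2$ for each of the four multiplier indices $i$, so $|\Gamma| \le 8$. Plugging this into $|\Gamma| \ge 2(t(S) - 1)$ from Lemma \ref{onetwo} already forces $t(S) \le 5$.

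It remains to eliminate the extremal case $t(S) = 5$. Here $T = 4$, $|\Gamma| = 8$, every $D_i$ has exactly two distinct elements of $P$, and Lemma \ref{onetwo} is saturated for every pair: $|\Gamma_f| + |\Gamma_{F(S) - f}| = 4$. The idea is to use Lemma \ref{zeroplace} to produce one extra zero and violate Lemma \ref{zeronumber}. Pick any column $i$, write $D_i = \{f, f'\}$ with $f = M_{ji}$ and $f' = M_{ki}$ and (up to swapping) $\lambda_{ji} \ge \lambda_{ki}$. Choose an RF-matrix $A$ for $f$ whose row $j$ realizes the pure factorization $f + n_j = \lambda_{ji}\,n_i$; this forces $a_{jk} = 0$. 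By Lemma \ref{zeroplace}, the RF-matrix $B$ for $F(S) - f$ satisfies $b_{kj} = 0$. Hence the pair $(a_{jk}, b_{kj})$ is doubly zero, and the total number of zeroes in $A \cup B$ is at least $13$. But Lemma \ref{zeronumber} bounds it by $|\Gamma_f| + |\Gamma_{F(S) - f}| + 8 = 12$, a contradiction.

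The main obstacle is ensuring that the doubly-zero pair produced in Step 2 is genuinely new, not already implicit in Proposition \ref{prop}, and that the pure-row choice for $A$ can be realised consistently. A subtle case to check is when several columns containing $f$ prescribe the same subtracted index $j$ (which happens exactly when $f + n_j$ admits two distinct pure factorisations $\lambda_{ji}\,n_i$ on different generators): in this situation only one pure row can be set at a time, so one must argue that at least one of the predicted doubly-zero pairs survives. A short case analysis on the bipartite incidence graph between $P$ and $\{1,2,3,4\}$, which under the extremal hypothesis is $2$-regular on both sides (hence either an $8$-cycle or two $4$-cycles), closes the argument by reducing to structural identities on the $n_i$ incompatible with minimality.
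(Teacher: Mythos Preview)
Your argument is correct and follows essentially the same route as the paper. The paper also bounds $|\Gamma|\le 8$ via Proposition~\ref{triplet} and then, in the extremal case $t(S)=5$, picks $f=M_{ji}$, $f'=M_{ki}$ in one column, chooses row $j$ of $A$ to be the pure factorization (so $a_{jk}=0$), invokes Lemma~\ref{zeroplace} to get $b_{kj}=0$, and derives $13$ zeroes against the upper bound of $12$ from Lemma~\ref{zeronumber}. Your observation that $\mu_{f,i}\le 1$ (two pairs in the same column cannot carry the same label without violating minimality of a generator) is a nice addition: the paper applies Proposition~\ref{triplet} to three elements in one column without saying why they are \emph{distinct} pseudo-Frobenius numbers, so you have filled a small gap.

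Your final paragraph, however, raises phantom difficulties. The doubly-zero pair $(a_{jk},b_{kj})$ is automatically ``new'': Proposition~\ref{prop} guarantees $\sum_{(p,q)}\bigl([a_{pq}=0]+[b_{qp}=0]\bigr)\ge 12$ termwise, and the single term for $(p,q)=(j,k)$ equals $2$, forcing the total to be at least $13$; no further bookkeeping is needed. Likewise there is no consistency issue with the pure-row choice: you fix \emph{one} column $i$ and set \emph{one} row (row $j$) of $A$; the other rows of $A$ and all of $B$ are arbitrary, and Lemma~\ref{zeroplace} and Lemma~\ref{zeronumber} apply to any RF-matrices. The bipartite-graph case analysis you sketch is therefore unnecessary; your proof is already complete at the end of the second paragraph.
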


\begin{proof}

By Lemma \ref{onetwo}, $|\Gamma| \ge 2(t(S)-1)$. However, $|\Gamma| \ge 9$ implies that there are three elements of the form $M_{ji},M_{ki},M_{hi}$ in $PF(S) \setminus \{F(S)\}$ for $\{i,j,k,h\}=\{1,2,3,4\},$
contradicting Lemma \ref{triplet}. Thus $|\Gamma| \le 8$, and $2(t(S)-1) \le 8$, that is $t(S) \le 5$.

Assume now $t(S)=5$, which implies $|\Gamma| = 8$. Then Lemma \ref{triplet} forces that for every $i=1,2,3,4$  there exist exactly two indexes $j,k$ such that $M_{ji},M_{ki} \in \Gamma$. 
Thus we can assume, without loss of generality, that there exist $f,f' \in PF(S) \setminus \{F(S)\}$ such that $$f=\lambda_{21}n_1-n_2, \ \ \ f'=\lambda_{31}n_1-n_3, \ \ \lambda_{21} \ge \lambda_{31}.$$
Let $A=(a_{ij})$ be an RF-matrix for $f$ and $B=(b_{ij})$ be an RF-matrix for $F(S)-f$.
Lemma \ref{onetwo} and $|\Gamma|=8$ imply that $|\Gamma_f| + |\Gamma_{F(S)-f}| = 4$ for every $f \in PF(S) \setminus \{F(S)\}$, therefore by Lemma \ref{zeronumber} and Proposition \ref{prop} we deduce that $A \cup B$ contain exactly $12$ zeroes, and thus, by Proposition \ref{prop}, for every pair of indexes $i,j$ with $i \neq j$, exactly one between $a_{ij},b_{ji}$ is zero. However by Proposition \ref{zeroplace} we have $b_{32}=0$, thus taking the pair of elements $0=a_{23}=b_{32}$ we reach a contradiction.
\end{proof}
The final step of our proof excludes the case $t(S)=4$. This case is somewhat more complicated, and our argument is slightly different:

\begin{lemma}\label{4prop}

Let $S= \langle n_1,n_2,n_3,n_4 \rangle$ be almost symmetric, with $t(S)=4$, and let $C$ be an RF-matrix for $\frac{F(S)}{2} \in PF(S) \setminus \{F(S)\}.$
Then $|\Gamma|=8$, $|\Gamma_{\frac{F(S)}{2}}|=4$, and each row of $C$ contains exactly two zeroes.
\end{lemma}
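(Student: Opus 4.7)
The plan is to show that every row of $C$ must contain exactly two off-diagonal zeros; from this, Proposition \ref{unique} yields $|\Gamma_{F(S)/2}|\ge 4$, and combining with Lemma \ref{onetwo} (applied to the third pair of non-$F(S)$ PF-numbers $g, F(S)-g$) and with $|\Gamma|\le 8$ forces $|\Gamma_{F(S)/2}|=4$ and $|\Gamma|=8$. The bound $|\Gamma|\le 8$ follows from Proposition \ref{triplet} together with the observation that, for distinct $j,k \ne i$, the values $M_{ji}$ and $M_{ki}$ are always distinct: equality would give $(\lambda_{ji}-\lambda_{ki})n_i = n_k - n_j$, forcing one of $n_j, n_k$ to fail minimality.

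As a preliminary, Proposition \ref{prop} applied to the pair $(C,C)$ (valid since $F(S)/2 + F(S)/2 = F(S)$) gives $c_{ij}c_{ji}=0$ for all $i \ne j$, producing at least 6 off-diagonal zeros in $C$. No row of $C$ has 3 off-diagonal zeros (else $F(S)/2 = -n_i$); writing $r_k$ for the number of rows with exactly $k$ off-diagonal zeros, $r_0+r_1+r_2=4$ and $r_1+2r_2 \ge 6$ force $r_2 \ge 2$. The configurations with $r_2<4$ that remain to be excluded are $(r_0,r_1,r_2) \in \{(0,2,2),(1,0,3),(0,1,3)\}$.

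Consider the representative case $(1,0,3)$: if row 1 of $C$ has zero off-diagonal zeros, then $c_{j1}=0$ for $j=2,3,4$, and chasing the constraints $c_{ij}c_{ji}=0$ together with the impossibility of a zero row propagates, after a cyclic relabeling of $n_2, n_3, n_4$, into the pattern
\[C = \begin{pmatrix}-1 & + & + & + \\ 0 & -1 & 0 & + \\ 0 & + & -1 & 0 \\ 0 & 0 & + & -1\end{pmatrix},\]
so that $\{M_{24}, M_{32}, M_{43}\} \subseteq \Gamma_{F(S)/2}$ and all displayed $+$ entries are strictly positive. Let $A, B$ be RF-matrices for $g, F(S)-g$. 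Since $|\Gamma_g|+|\Gamma_{F(S)-g}|\ge 4$ (Lemma \ref{onetwo}) and each of columns $2, 3, 4$ admits at most one further $\Gamma$-element (by the column cap), some extra $M_{pq} \in \Gamma_g \cup \Gamma_{F(S)-g}$ shares a column $q\in\{2,3,4\}$ with the corresponding $M_{jq}\in\Gamma_{F(S)/2}$. Applying Lemma \ref{zeroplace} to this pair: if $\lambda_{jq} \ge \lambda_{pq}$, the lemma forces a zero in $C$ at a position the cyclic pattern shows to be strictly positive, a direct contradiction; in the opposite ordering $\lambda_{pq} > \lambda_{jq}$, the lemma places a zero in $A$ or $B$ which, combined with the zeros contributed by the row of $A$ or $B$ corresponding to $M_{pq}$ (whose pure factorization $M_{pq} + n_p = \lambda_{pq} n_q$ forces another zero in the partner entry of the $(a_{ij}, b_{ji})$-pair affected), produces two zeros in a single pair. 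But Lemma \ref{zeronumber} combined with Proposition \ref{prop} gives, in the tight case $|\Gamma_g|+|\Gamma_{F(S)-g}|=4$, exactly 12 zeros in $A \cup B$, i.e.\ exactly one per pair, contradicting the double zero just exhibited; the looser case $|\Gamma_g|+|\Gamma_{F(S)-g}|=5$ is handled by enumerating the few permissible column distributions and locating an analogous double zero. The sub-configurations $(0,1,3)$ and $(0,2,2)$ lead to parallel near-cyclic patterns and are excluded by the same mechanism.

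The main obstacle is this contradiction step: one must carefully track how Lemma \ref{zeroplace} applications and the pure-factorization zeros of $A, B$ interact across every permissible distribution of $\Gamma_g \cup \Gamma_{F(S)-g}$ in order to locate the doubly-zero pair violating the count imposed by Lemma \ref{zeronumber}; the dual use of Lemma \ref{zeroplace} (in both orderings of the $\lambda$-values) is the essential tool.
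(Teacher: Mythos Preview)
Your approach diverges substantially from the paper's, and as written it is incomplete.

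The paper never analyses the possible zero-patterns of $C$ case by case. Instead it runs a single global zero-count over the \emph{three} matrices $A,B,C$ simultaneously (viewing the pair $(C,C)$ alongside the pair $(A,B)$). An upper bound of $16+|\Gamma_A|+|\Gamma_B|+2|\Gamma_C|$ on the total number of zeros is obtained from the row-by-row count of Lemma~\ref{zeronumber}, and a lower bound of $24+(|\Gamma_A|+|\Gamma_B|+|\Gamma_C|-4)$ comes from Proposition~\ref{prop} (twelve zeros per pair) together with one extra zero for each column containing two $\Gamma$-elements, supplied by Lemma~\ref{zeroplace}. Comparing the two bounds yields $|\Gamma_C|\ge 4$ in one line, and everything else follows immediately. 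No enumeration of $(r_0,r_1,r_2)$, no separate treatment of the orderings of the $\lambda$'s, and no chasing of double-zero pairs is needed.

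Your argument, by contrast, commits to a case split on the zero-pattern of $C$ and then tries to extract a contradiction one configuration at a time. Even in the single sub-case you treat in detail ($(r_0,r_1,r_2)=(1,0,3)$ with $|\Gamma_g|+|\Gamma_{F(S)-g}|=4$) the argument is delicate: it relies on choosing the $p$-th row of $A$ (or $B$) to be the pure factorisation, which is legitimate, but already here one must check that the double-zero pair you produce is not one that was already forced to be a double zero by the two-zero rows of $A\cup B$, i.e.\ that it genuinely pushes the count beyond twelve. More seriously, the remaining sub-cases are not proved. In the ``looser'' situation $|\Gamma_g|+|\Gamma_{F(S)-g}|=5$ the upper bound becomes thirteen, so a single double-zero pair no longer suffices; you would need two such pairs that are distinct as unordered pairs, and this requires controlling how the five elements of $\Gamma_g\cup\Gamma_{F(S)-g}$ distribute over the four columns and over $g$ versus $F(S)-g$, including the possibility that two of them share the same first index $p$ (in which case the ``pure row'' trick can only be used once in that row). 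The configurations $(0,1,3)$ and $(0,2,2)$ are dismissed as parallel, but $(0,2,2)$ in particular does not produce a cyclic pattern of the same shape, and the location of the forced $c_{pj}=0$ from Lemma~\ref{zeroplace} is no longer guaranteed to hit a strictly positive entry of $C$. None of this is carried out.

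In short: the mechanism you describe is plausible and might be made to work with a substantial amount of additional case-checking, but as it stands the proof is a sketch with real gaps, and the paper's simultaneous count over $A,B,C$ is both shorter and avoids the case analysis entirely.
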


\begin{proof}
By Lemma \ref{onetwo} and Lemma \ref{triplet} it follows $6 \le |\Gamma| \le 8$. Let $PF(S) = \{f,\frac{F(S)}{2},F(S)-f,F(S)\}$, and let $A$, $B$, be RF-matrices for $f$ e $F(S)-f$. 
Consider the two pairs of matrices $A,B$ and $C,C$, and let $\Gamma_A,\Gamma_B,\Gamma_C$ be the (disjoint) sets containing the elements of $\Gamma$ that appear in the matrices $A,B,C$ respectively. There is a bijection between these three sets and the rows of $A,B,C$ having exactly two zeroes. Moreover, the number of rows of $A$ with exactly one zero is at most $4-|\Gamma_A|$, and then $A$ has at most $4 -|\Gamma_A| + 2|\Gamma_A|=4+|\Gamma_A|$ zeroes (and similarly for $B$ and $C$). Summing these values we obtain that there are at most $16+|\Gamma_A|+|\Gamma_B|+2|\Gamma_C|$ zeroes in the two pairs of matrices $A,B$ and $C,C$.

However, counting these zeroes starting from Proposition \ref{prop}, there are at least $12$ zeroes for each pair of matrices. By Lemma \ref{triplet}, for each $i$ there can be at most two indexes $j,k$ and two elements $f_i,f_i' \in PF(S) \setminus \{F(S)\}$ such that $f_i=M_{ji}$,$f_i'=M_{ki}$. Furthermore, for each such pair, Proposition \ref{zeroplace} states that there is a pair of elements either of the form $a_{jk},b_{kj}$ or $c_{jk},c_{kj}$ that are both zeroes. Since the number of such pairs is at least $|\Gamma_A|+|\Gamma_B|+|\Gamma_C|-4$, we obtain that there are at least $12+12+|\Gamma_A|+|\Gamma_B|+|\Gamma_C|-4$ zeroes in the two pairs $A,B$ and $C,C$.
Combining both bounds we obtain the inequality
 $$20+|\Gamma_A|+|\Gamma_B|+|\Gamma_C| \le 16+|\Gamma_A|+|\Gamma_B|+2|\Gamma_C|,$$ hence $|\Gamma_C| \ge 4.$
 This implies that each row of $C$ represents an element of $\Gamma_C$ and thus contains exactly two zeroes, and the restrictions $\Gamma_C \subseteq \Gamma_{\frac{F(S)}{2}}$, $|\Gamma| \le 8$ and $|\Gamma_f|+|\Gamma_{F(S)-f}| \ge 4$ force $|\Gamma|=8$, $|\Gamma_{\frac{F(S)}{2}}|=4$.
\end{proof}

With this Lemma we are ready to prove that $t(S) \le 3$. Here, our proof revolves around showing that if $t(S)=4$ then, by using the previous Lemma, there is no RF-matrix for $\frac{F(S)}{2} \in PF(S) \setminus \{F(S)\}$, thus reaching the desired contradiction.

\begin{proof}[Proof of Theorem 1]

Assume that $t(S) = 4$, that is $|PF(S) \setminus \{F(S)\}|=3$. Since Lemma \ref{4prop} states that $|\Gamma|=8$, by Lemma \ref{triplet} it follows that for each index $i$ there exist exactly two elements $M_{j_i,i},M_{k_i,i}$ such that $M_{j_i,i},M_{k_i,i} \in PF(S) \setminus \{F(S)\}$, and $$M_{j_i,i}=\lambda_{j_i,i}n_i-n_{j_i}, \ \ M_{k_i,i}=\lambda_{k_i,i}n_i-n_{k_i}, \ \ \lambda_{j_i,i} \ge \lambda_{k_i,i}.$$
Take now $A$ and $B$ RF-matrices for $f$ and $F(S)-f$.
By Lemma \ref{zeronumber} and Proposition \ref{prop} there are exactly $12$ zeroes in the pair of matrices $A,B$. However, $M_{j_i,i} \neq \frac{F(S)}{2}$ would imply by Proposition \ref{zeroplace} that there are at least $13$ zeroes in the pair of matrices $A,B$. Therefore $M_{j_i,i}=\frac{F(S)}{2}$ for every $i=1,2,3,4$, and $\lambda_{j_i,i} > \lambda_{k_i,i}$.

Let $C$ be an RF-matrix for $\frac{F(S)}{2}$; recall that by Lemma \ref{4prop} each row of $C$ contains exactly two zeroes, and all elements of $\Gamma_{\frac{F(S)}{2}}$ appear in $C$. We will now show that there cannot be such a matrix.

Assume, rearranging our indexes, that $j_1=2$ and $k_1=3$, that is $\lambda_{21} \ge \lambda_{31}$ and $M_{21}=\lambda_{21}n_1-n_2=\frac{F(S)}{2}$. Then we have $$C=\begin{pmatrix}
-1 & c_{12} & c_{13} & c_{14} \\
\lambda_{21} & -1 & 0 & 0 \\
c_{31} & c_{32} & -1 & c_{34} \\
c_{41} & c_{42} & c_{43} & -1 \\ \end{pmatrix}.$$
If $c_{p1} \neq 0$ for some $p \neq 2$, since each row of $C$ has two zeroes then (Proposition \ref{unique}) $c_{p1}=\lambda_{p1}$, hence $\lambda_{21}n_1-n_2=\lambda_{p1}n_1-n_p$, that leads to either $n_2 \in \langle n_1,n_p \rangle$ or $n_p \in \langle n_1,n_2 \rangle$. However, both conclusions are impossible; thus $c_{41}=c_{31}=0$. With a similar reasoning, we can prove that in each column of $C$ there can be at most one positive element, and since in each row of $C$ there are at least two zeroes (hence there are at most four positive elements in all $C$), it follows necessarily that in each column of $C$ there is exactly one positive element.

Moreover, by Proposition \ref{prop} $c_{12}=0$ and by Proposition \ref{zeroplace} $c_{32}=0$. Thus we must have $c_{42} > 0$; by Proposition \ref{unique} we deduce $c_{42}=\lambda_{42}$ and $c_{41}=c_{43}=0$. Moreover, since $c_{31}=c_{32}=0$, considering the third row of $C$ we must have $c_{34} > 0$, that is $c_{34} = \lambda_{34}$. Finally, $c_{34} > 0$ implies $c_{14}=0$, therefore considering the first row of $C$ we obtain $c_{13}=\lambda_{13}$. Thus in this setting $C$ is fully determined, and
$$C=\begin{pmatrix}
-1 & 0 & \lambda_{13} & 0 \\
\lambda_{21} & -1 & 0 & 0 \\
0 & 0 & -1 & \lambda_{34} \\
0 & \lambda_{42} & 0 & -1 \\ \end{pmatrix},$$
that is $M_{21}=M_{34}=M_{13}=M_{42}=\frac{F(S)}{2}$.

Now we are left to find the elements $M_{k_i,i}$. By Proposition \ref{zeroplace} on $\frac{F(S)}{2}$, there is a correspondence between the elements $M_{k_i,i}$ and the pairs of elements $c_{j_ik_i},c_{k_ij_i}$ in $C$ such that $c_{j_ik_i}=c_{k_ij_i}=0$. By checking $C$ we notice that the only such pairs are $c_{23}=c_{32}=0$ and $c_{14}=c_{41}=0$, thus we obtain that $j_1=2, j_2=4,j_3=1,j_4=3$ imply $k_1=3, k_2=1, k_3=4, k_4=2.$
Consider now the following four elements: $M_{21}=M_{34}=\frac{F(S)}{2}$, and $M_{31},M_{24} \in \Gamma_f \cup \Gamma_{F(S)-f}$. By definition of $j_i$ and $k_i$ we have $\lambda_{21} > \lambda_{31}$ and $\lambda_{34} > \lambda_{24}$.
 
Now, under the assumption that $M_{31}$ and $M_{24}$ do not belong to the same set $\Gamma_f$, we deduce $$M_{31}+M_{24}=F(S)=M_{21}+M_{34} \Longrightarrow \lambda_{31}n_1-n_2-n_3+\lambda_{24}n_4=\lambda_{21}n_1-n_2-n_3+\lambda_{34}n_4,$$ that is a contradiction since $\lambda_{21} > \lambda_{31}$ and $\lambda_{34} > \lambda_{24}$. Thus $M_{31}=M_{24}$, and since $M_{13}=M_{34}=\frac{F(\mathcal{S})}{2}$, we have $$M_{34}+M_{31}-M_{24}=M_{13}=\frac{F(\mathcal{S})}{2} \Longrightarrow$$
$$\lambda_{34}g_4-g_3+\lambda_{31}g_1-g_3-\lambda_{24}g_4+g_2=\lambda_{13}g_3-g_1=\frac{F(\mathcal{S})}{2}.$$
Then we have $$F(\mathcal{S})=\lambda_{34}g_4-g_3+\lambda_{31}g_1-g_3-\lambda_{24}g_4+g_2+\lambda_{13}g_3-g_1 \Longrightarrow$$ $$F(\mathcal{S})=(\lambda_{34}-\lambda_{24})g_4+(\lambda_{13}-2)g_3+g_2+(\lambda_{31}-1)g_1.$$
Since $j_4=3, k_4=2$ and $\lambda_{j_i,i} > \lambda_{k_i,i}$ for every $i=1,2,3,4$, it follows that $\lambda_{34} > \lambda_{24}$. Thus $F(\mathcal{S}) \not \in \mathcal{S}$ implies $\lambda_{13}=1$; on the other hand, we know that $j_3=1$ and $k_3=4$, therefore $1=\lambda_{13} > \lambda_{43} > 0$, which yields a contradiction.

Therefore there cannot be an RF-matrix $C$ for $\frac{F(S)}{2}$, and since $\frac{F(S)}{2} \in PF(S)$, this is a contradiction. Then $t(S) \le 3$.
\end{proof}

\section{Considerations on the general case}

Given the results of Section $1$, it is natural to investigate bounds for $t(S)$ in higher embedding dimension. In this work, bounds for $t(S)$ were proved by finding factorizations of pseudo-Frobenius numbers of $S$ as $\lambda n_i-n_j$, and this was done by counting zeroes in RF matrices for $\{f,F(S)-f\} \subseteq PF(S)$ via Proposition \ref{prop}, while sharper bounds were found using various lemmas.

If we consider an almost symmetric numerical semigroup $S=\langle n_1,\ldots, n_e \rangle$, $e \ge 4$, $f \in PF(S) \setminus \{F(S)\}$ and $A,B$ RF-matrices respectively for $f$ and $F(S)-f$, there are $e^2-e$ pairs of elements of the form $a_{ij},b_{ji}$, thus by Proposition \ref{prop} there are at least $e^2-e$ zeroes in $A \cup B$. If we denote by $r_i$ the number of rows of $A \cup B$ with at least $i$ zeroes, in order to relate $f$ or $F(S)-f$ with elements of the form $\lambda n_i-n_j$, we need some bounds for $r_{e-2}$. Since we have $$\sum_{i=1}^{e-2} r_i \ge e^2-e$$
and each $r_i$ is at most $2e$, setting  $d=\left \lfloor \frac{e}{2} \right \rfloor$ we have $$\sum_{i=1}^{e-2} r_i = \sum_{i=1}^{d-1} r_i + \sum_{i=d}^{e-2} r_i \le 2e(d-1) + \sum_{i=d}^{e-2} r_i \le e^2-2e + \sum_{i=d}^{e-2} r_i,$$ and thus by combining these inequalities we obtain $$\sum_{i=d}^{e-2} r_i \ge e.$$ However, while for $e=4$ we have $e-2=d$ and thus we can deduce $r_{e-2} \ge 4$, for $e \ge 5$ this deduction clearly fails. Furthermore, bounds for $r_p$ with $p < e-2$ are not as useful, since there could be more than one pseudo-Frobenius numbers of the form $\displaystyle (\sum a_i n_i)-n_j$ for the same set of indexes. Thus a generalization of our argument does not seem straightforward.

Using GAP (cf. \cite{D}), we computed $t(S)$ and $e(S)$ for almost symmetric numerical semigroups $S$ such that $|\mathbb{N} \setminus S| \le 32$ (around $10^6$ numerical semigroups). The results are summarized in Figure \ref{fig}.
\begin{figure}[h] 
  \centering 
  \includegraphics[scale=.6]{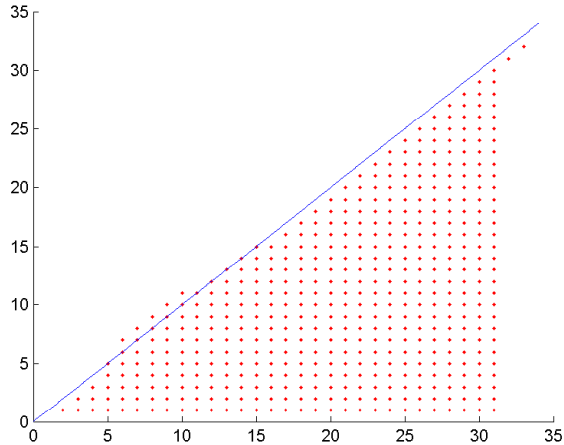} 
  \caption{Type and embedding dimension of $S$ such that $|\mathbb{N}\setminus S | \le 32$}\label{fig} 
\end{figure}

Our result proves that $t(S) < e(S)$ if the embedding dimension is four. However, this inequality fails if $e(S)=5$.
\begin{example}\label{t>e}
	Consider the numerical semigroup $S= \langle 14,15,17,19,20  \rangle$. We have $PF(S)= \{16,18,23,25,41\}$, and since $41=25+16=23+18$, $S$ is an almost-symmetric numerical semigroup such that $t(S)=e(S)=5$.
\end{example}
While in general the inequality $t(S) < e(S)$ fails, the data suggests that for each value of the embedding dimension the type is bounded. Recall that this is false for arbitrary numerical semigroup (cf. \cite{RG}, Example 2.24). Furthermore, in the examples considered, the type is always bounded by $e(S)+1$. In a private communication, Francesco Strazzanti pointed out that, if $\mathcal{S}$ is an almost symmetric numerical semigroup such that $t(\mathcal{S})-e(\mathcal{S}) \ge 0$ (like the one presented in Example \ref{t>e}), it is possible to construct, via \emph{numerical duplication} (cf. \cite{BDS}), an almost symmetric numerical semigroup $\mathcal{S}'$ such that $t(\mathcal{S}')-e(\mathcal{S}') > t(\mathcal{S})-e(\mathcal{S})$. Therefore, the inequality $t(\mathcal{S}) \le e(\mathcal{S})+b$, with $b$ a positive integer, does not hold for all almost symmetric numerical semigroups.

We conclude this work with the question:
\begin{question}
Let $S$ be an almost symmetric numerical semigroup. 

Is $t(S)$ bounded by a function of $e(S)$?

\end{question}

\section*{Acknowledgements}
I would like to thank Alessio Sammartano for his helpful comments and remarks on this work, and for his great help in the contextualization of this result, and Professor Marco D'Anna for some discussions on this subject. Moreover, I would like to thank Professor Keiichi Watanabe and Professor Naoyuki Matsuoka for their comments, and for noticing an error in the proof on the main result.

\end{document}